\newtheorem*{acknowledgement}{Acknowledgement}
\newtheorem{corollary}{Corollary}
\newtheorem{lemma}{Lemma}
\newtheorem{proposition}{Proposition}
\newtheorem{theorem}{Theorem}
\numberwithin{equation}{section}
\begin{document}

\title[Einstein warped products]{Bach-flat noncompact steady\\ quasi-Einstein manifolds}
\author{M. Ranieri}
\author{E. Ribeiro Jr} 

\address[M. Ranieri]{Universidade Federal de Alagoas - UFAL, Instituto de Matem\'atica, CEP 57072-970-Macei\'o / AL, Brazil} \email{ranieri2011@gmail.com}

\address[E. Ribeiro]{Universidade Federal do Cear\'a - UFC, Departamento  de Matem\'atica, Campus do Pici, Av. Humberto Monte, Bloco 914, CEP 60455-760-Fortaleza / CE, Brazil}\email{ernani@mat.ufc.br}

\thanks{E. Ribeiro was partially supported by CNPq/Brazil}

\thanks{M. Ranieri was partially supported by CAPES/Brazil}

\keywords{Einstein manifolds; warped product; Bach-flat metrics.}
\subjclass[2010]{Primary 53C25, 53C20, 53C21; Secondary 53C65}
\date{September 21, 2016}

\newcommand{\spacing}[1]{\renewcommand{\baselinestretch}{#1}\large\normalsize}
\spacing{1.2}

\begin{abstract}
The goal of this article is to study the geometry of Bach-flat noncompact steady quasi-Einstein manifolds. We show that a Bach-flat noncompact steady quasi-Einstein manifold $(M^{n},\,g)$ with positive Ricci curvature such that its potential function has at least one critical point must be a warped product with Einstein fiber. In addition, the fiber has constant curvature if $n = 4.$
\end{abstract}

\maketitle

\section{Introduction}

In 1958, Ren\'e Thom posed the following well-known question: ``{\it Are there any best Riemannian structures on a smooth manifold?}". The best Riemannian structures on a given manifold are those of constant curvature. In this spirit, a Riemannian manifold of dimension greater than 2 with constant Ricci curvature is called {\it Einstein.} Hilbert and Einstein proved that the critical metrics of the total scalar curvature functional restricted to the set of smooth Riemannian structures on a compact manifold of unitary volume are Einstein. We remark that Einstein manifolds are not only fascinating in themselves but are also related to many important topics of Riemannian geometry. For a comprehensive reference on such a subject, we refer the reader to \cite{Besse}.

A classical problem in Riemannian geometry is to construct new explicit examples of Einstein metrics. According to ``Besse's book" \cite{Besse}, a promising way for that purpose is that of warped products. The $m$-Bakry-Emery Ricci tensor, which appeared
previously in \cite{bakry} and \cite{Qian}, is useful as an attempt to better understand Einstein warped product. More precisely, the $m$-Bakry-Emery Ricci tensor is given by
\begin{equation}
\label{bertens}
Ric_{f}^{m}=Ric+\nabla ^2f-\frac{1}{m}df\otimes df,
\end{equation} where $f$ is a smooth function on $M^n$ and $\nabla ^2f$ stands for the Hessian of $f.$ We highlight that it is also used to study the weighted measure $d\mu=e^{-f}dx,$ where $dx$ is the Riemann-Lebesgue measure determined by the metric. 

A complete Riemannian manifold $(M^n,\,g),$ $n\geq 2,$ will be called $m$-{\it quasi-Einstein manifold}, or simply {\it quasi-Einstein manifold}, if there exist a smooth potential function $f$
on $M^n$ and a constant $\lambda$ satisfying the following fundamental equation
\begin{equation}
\label{eqqem}
Ric_{f}^{m}=Ric+\nabla ^2f-\frac{1}{m}df\otimes
df=\lambda g,
\end{equation} where $\nabla ^2 f$ stands for the Hessian of $f.$ 

It is easy to see that a $\infty$-quasi-Einstein manifold means a gradient Ricci soliton. Ricci solitons model the formation of singularities in the Ricci flow and  correspond to self-similar solutions, i.e., solutions which evolve along symmetries of the flow, see \cite{Cao} and references therein for more details on this subject. On the other hand, when $m$ is a positive integer it
corresponds to a warped product Einstein metric, see,
for instance, \cite{CaseShuWey,Kim}. We also remark that $1$-quasi-Einstein manifolds are more commonly called {\it static metrics} and such metrics have connections to scalar curvature, the positive mass theorem and general relativity. Recall that a quasi-Einstein metric $g$ on a manifold $M^n$ will be
called \emph{expanding}, \emph{steady} or \emph{shrinking},
respectively, if  $\lambda<0,\,\lambda=0$ or $\lambda>0$. Moreover,
a quasi-Einstein manifold will be called \emph{trivial} if its potential function $f$ is constant, otherwise it will be \emph{nontrivial}. Notice that
the triviality implies that $M^n$ is an Einstein manifold.

According to \cite{Ernani2,Besse,CaseShuWey,Kim} and \cite{rimoldi} the remarkable motivation to study quasi-Einstein metrics on a Riemannian manifold is its direct relation with the existence of Einstein warped product, which also have different properties compared with the gradient Ricci solitons. In this sense, it is important to recall that, on a quasi-Einstein manifold, there is an indispensable constant  $\mu$ such that 
\begin{equation}\label{2eq}
\Delta f-|\nabla f|^{2} = m\lambda-m\mu e^{\frac{2}{m}f}.
\end{equation} See \cite{Ernani2,Kim,rimoldi} and \cite{Wang1} for a comprehensive treatment of this feature. 

Qian \cite{Qian} proved that shrinking quasi-Einstein manifolds must be compact. Moreover, from Kim-Kim \cite{Kim} the converse statement remains true. Thereby,  it is now well-known that a quasi-Einstein manifold is compact if and only if $\lambda>0.$ An example of nontrivial quasi-Einstein manifold with $\lambda>0,$ $m>1$ and $\mu>0$ was obtained in \cite{LuePage}. Some examples of expanding quasi-Einstein manifolds with arbitrary $\mu$ as well as steady quasi-Einstein manifolds with $\mu>0$ were constructed in \cite{BRS,Besse} and \cite{Wang1}. At the same time, Case \cite{Case} has shown that steady quasi-Einstein manifolds with $\mu\leq0$ are trivial. See also \cite{rimoldi2} for further results related.

In order to proceed it is important to remember that the Bach tensor on a Riemannian manifold $(M^n,g),$ $n\geq 4,$ which was introduced to study conformal relativity in \cite{bach}, is defined in terms of the components of the Weyl tensor $W_{ikjl}$ as follows
\begin{equation}
\label{bach} B_{ij}=\frac{1}{n-3}\nabla^{k}\nabla^{l}W_{ikjl}+\frac{1}{n-2}R_{kl}W_{i}\,^{k}\,_{j}\,^{l},
\end{equation} while for $n=3$ it is given by
\begin{equation}
\label{bach3} B_{ij}=\nabla^kC_{kij},
\end{equation} where $C_{ijk}$ stands for the Cotton tensor. We say that $(M^n,g)$ is Bach-flat when $B_{ij}=0.$ It is straightforward to check that locally conformally flat metrics as well as Einstein metrics are Bach-flat. In addition, for dimension $n=4,$ it is well-known that half-conformally flat or locally conformally to an Einstein manifold implies Bach-flat. However, Leistner and Nurowski \cite{LN} obtained a large class of Bach-flat examples which are not conformally Einstein; for more details we address to \cite{Besse}.

Recently, Cao and Chen  \cite{caoshinking} have studied Bach-flat gradient Ricci solitons.  They obtained a stronger classification for shrinking gradient Ricci solitons under the Bach-flat assumption. Afterward, Cao, Catino, Chen, Mantegazza and Mazzieri \cite{caosteady} were able to show that any $n$-dimensional $(n \ge 4)$ complete Bach-flat gradient steady Ricci soliton with positive Ricci curvature such that the scalar curvature $R$ attains its maximum at some interior point must be isometric to the Bryant soliton. For more details, we refer the reader \cite{caosteady,caoshinking} and \cite{caoW0}. The Bach-flat assumption was also studied in another special metrics, see, for instance \cite{ernani,ying,CMMR} and \cite{jiewei}.

In light of the previous results, it is natural to ask what occurs on quasi-Einstein manifolds. As it was previously mentioned a quasi-Einstein manifold is compact if and only if $\lambda>0.$ In that case, Chen and He \cite{QiangHe} proved that a Bach-flat shrinking quasi-Einstein manifold is either Einstein or a finite quotient of a warped product with $(n-1)$-dimensional Einstein fiber. In this paper, mainly inspired by \cite{caoW0} as well as \cite{caosteady}, we shall focus our attention on Bach-flat steady quasi-Einstein manifolds. In particular, the manifold must be noncompact. More precisely, we shall provide a classification result for Bach-flat noncompact steady quasi-Einstein manifolds with positive Ricci curvature. A crucial ingredient here that should be emphasized is a pinching estimate for the function  $u = e^{-\frac{f}{m}}$ (cf. Lemma \ref{lemmaforu}).

After these preliminary remarks we may announce our first result as follows.

\begin{theorem}\label{theoremcottonvanishes}
Let $(M^n,\,g,\,f,\,m>1),$ $n\geq 4,$ be a Bach-flat noncompact steady quasi-Einstein manifold with positive Ricci curvature such that $f$ has at least one critical point. Then $M^n$ has harmonic Weyl tensor and $W_{ijkl}\nabla^l f = 0.$
\end{theorem}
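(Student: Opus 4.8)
The plan is to adapt the strategy of Cao--Chen for Bach-flat steady Ricci solitons \cite{caosteady} (and of Chen--He in the compact shrinking quasi-Einstein case \cite{QiangHe}) to the noncompact steady setting. Since $\lambda = 0$, the fundamental equation \eqref{eqqem} reads $Ric + \nabla^2 f - \frac{1}{m}df\otimes df = 0$, and a direct computation shows that $u = e^{-f/m}$ satisfies the clean elliptic identity
\[
m\,\nabla^2 u = u\,Ric .
\]
Because $u>0$ and $Ric>0$, the Hessian of $u$ is positive definite, so $u$ is strictly convex; at a critical point of $f$ (where $\nabla u = 0$) it attains a strict global minimum, and is therefore proper, with growth controlled by \lemref{lemmaforu}. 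Recalling that for $n\geq 4$ the manifold has harmonic Weyl tensor precisely when the Cotton tensor $C_{ijk}$ vanishes, the goal reduces to proving $C_{ijk}=0$ and $W_{ijkl}\nabla^l f = 0$ simultaneously.

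First I would derive the pointwise bridge between the Bach, Cotton and Weyl tensors. Combining the definition \eqref{bach} with the contracted second Bianchi identity $\nabla^l W_{ijkl} = \tfrac{n-3}{n-2}C_{ijk}$ gives, up to the usual conventions,
\[
(n-2)B_{ij} = \nabla^k C_{ijk} + R_{kl}\,W_{i}{}^{k}{}_{j}{}^{l}.
\]
Next, differentiating the fundamental equation to express $\nabla_i R_{jk}$ and $\nabla_i R$ in terms of $f$ and the curvature, I would introduce the auxiliary three-tensor $D_{ijk}$ built from $Ric$, $R$, $\nabla f$ and $g$ (in the spirit of Cao--Chen) so that it satisfies the two structural relations: an algebraic one, $C_{ijk} = D_{ijk} - W_{ijkl}\nabla^l f$, and a differential one expressing $B_{ij}$, up to a suitable power $u^{a}$ of $u$, as a weighted divergence of $D_{ijk}$ contracted against $\nabla f$. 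Finding the exact normalization of $D$ and the exponent $a$ so that both identities hold — the power $u^a$ must precisely compensate the conformal weights introduced in passing from $f$ to $u$ — is the technical heart of this step.

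With these identities in hand I would set up the weighted integral estimate. Contracting the differential identity with $\nabla^j f$, multiplying by $u^a$ and integrating, the Bach-flat hypothesis $B_{ij}=0$ together with integration by parts yields
\[
\int_M |D_{ijk}|^2\,u^{a}\,dV = \text{(boundary integral over } \{u = t\}).
\]
The properness of $u$ and the pinching estimate of \lemref{lemmaforu} guarantee that the boundary term over the level sets $\{u=t\}$ tends to zero as $t\to\infty$; hence the left-hand side vanishes and $D_{ijk}\equiv 0$. From the algebraic relation we then have $C_{ijk} = W_{ijkl}\nabla^l f$, so the two tensors lie in the same symmetry class (trace-free, antisymmetric in the first pair, cyclic). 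A further weighted integration by parts, using that $C_{ijk}$ is itself a divergence of the Weyl tensor together with the positivity of $Ric$ to sign the resulting Hessian-of-$u$ term, separates $\int_M |C_{ijk}|^2 u^a$ from $\int_M |W_{ijkl}\nabla^l f|^2 u^a$ and forces each to vanish. This gives $W_{ijkl}\nabla^l f = 0$ and, in turn, $C_{ijk}=0$, i.e. harmonic Weyl tensor.

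I expect the main obstacle to be the two noncompact integrations by parts. Unlike the compact shrinking case of \cite{QiangHe}, one cannot integrate over $M$ directly, and the whole argument hinges on the growth control for $u$ from \lemref{lemmaforu} to make the boundary integrals over $\{u=t\}$ vanish as $t\to\infty$. The second delicate point is purely computational: pinning down the three-tensor $D_{ijk}$ and the weight $u^a$ so that the Bach tensor becomes an exact weighted divergence, since the conformal change $f\mapsto u$ introduces extra curvature and first-derivative terms that must cancel precisely.
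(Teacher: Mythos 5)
Your first stage --- expressing $(n-2)B_{ij}$ through $\nabla^k D_{kij}$ and a term $W_{kijl}\nabla^k f\nabla^l f$, contracting with the gradient, and integrating to conclude $D\equiv 0$ --- is exactly the paper's Lemma \ref{bachimpliesd}, including the normalization $C_{ijk}=\frac{m+n-2}{m}D_{ijk}-W_{ijkl}\nabla^l f$ of \eqref{cottonwithd}. One caveat: a purely polynomial weight $u^{a}$ does not make the boundary terms vanish. By Bishop--Gromov the area of $\partial B_p(s)$ grows only polynomially, but $|D|$, $|\nabla u|$ and $u^2R$ are merely bounded, so with weight $u^a$ the flux term grows like $s^{a+n-1}$. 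The paper's weight is $e^{-u}u^3$, and the entire purpose of the lower bound $u\geq c_1r-c_2$ in \lemref{lemmaforu} is to convert $e^{-u}$ into a factor $e^{-c_1s+c_2}$ that beats $s^{n-1}$; without that exponential factor your boundary integral does not tend to zero.

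The second stage has a genuine gap. Once $D\equiv 0$, relation \eqref{cottonwithd} gives $C_{ijk}=-W_{ijkl}\nabla^l f$ as an identity of tensors, so $|C|^2$ and $|W_{ijkl}\nabla^l f|^2$ are pointwise equal: there is nothing to ``separate'' by a further integration by parts, and no sign argument on a Hessian term can force a quantity to vanish merely because it equals itself. The conclusion $C\equiv 0$ (hence $W_{ijkl}\nabla^l f=0$) requires new input, and the paper obtains it pointwise rather than integrally: $D\equiv 0$ activates Proposition \ref{levelcurvesgeometry} (level sets of $f$ are umbilical with $H$, $R$ and $|\nabla f|$ constant on them, $R_{1a}=0$, $R_{1abc}=0$), and in the adapted coordinates $g=|\nabla f|^{-2}df^2+g_{ab}(f,\theta)\,d\theta^a d\theta^b$ each component is checked separately: $C_{ij1}=0$ by the symmetries of $W$; $C_{abc}=-W_{abc1}|\nabla f|^2=0$ because $W_{abc1}=R_{abc1}=0$; and $C_{1ab}=0$ from the identity $R(\nabla f,\partial_a,\nabla f,\partial_b)=\frac{1}{n-1}Ric(\nabla f,\nabla f)g_{ab}$ inserted into the Weyl decomposition \eqref{riemanntensor}. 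Real-analyticity of the metric then propagates $C\equiv 0$ across the critical set of $f$. You should replace your second integral argument by this level-set analysis, or else supply an actual identity forcing $\int_M|C|^2$ to vanish, which your sketch does not provide.
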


At the same time, it is worth to point out that $4$-dimensional manifolds have special behavior; see \cite{Besse} for more information about this specific dimension. In such a dimension we have established the following result.

\begin{theorem}\label{theoremweylvanishes}
Let $(M^4,\,g,\,f,\,m>1)$ be a $4$-dimensional Bach-flat noncompact steady quasi-Einstein manifold with positive Ricci curvature such that $f$ has at least one critical point. Then $M^4$ is locally conformally flat.
\end{theorem}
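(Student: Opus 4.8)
The plan is to reduce the statement to a pointwise linear-algebra fact in dimension four, taking the conclusion of the preceding theorem as the only analytic input. First I would invoke \thmref{theoremcottonvanishes} to obtain $W_{ijkl}\nabla^{l}f=0$ on $M^{4}$ (the harmonic Weyl conclusion will not even be needed for this argument). I would then record that the structure is nontrivial: since $\lambda=0$, assuming $f$ constant would force, via \eqref{eqqem}, that $Ric=0$, contradicting the positivity of the Ricci curvature. Hence $f$ is nonconstant and $\Omega=\{p\in M^{4}:\nabla f(p)\neq 0\}$ is a nonempty open set.

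The core step is the pointwise argument on $\Omega$. Fixing $p\in\Omega$ and an orthonormal frame $\{e_{1},e_{2},e_{3},e_{4}\}$ with $e_{1}=\nabla f/|\nabla f|$, the identity $W_{ijkl}\nabla^{l}f=0$ reads $W_{ijk1}=0$ for all $i,j,k$; equivalently, viewing the Weyl tensor as a symmetric operator $\mathcal{W}\colon\Lambda^{2}T_{p}M\to\Lambda^{2}T_{p}M$, the three-dimensional subspace $V_{1}=\mathrm{span}\{e_{1}\wedge e_{2},\,e_{1}\wedge e_{3},\,e_{1}\wedge e_{4}\}$ lies in $\ker\mathcal{W}$. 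I would then use the four-dimensional splitting $\Lambda^{2}=\Lambda^{+}\oplus\Lambda^{-}$ into self-dual and anti-self-dual parts, under which $\mathcal{W}=W^{+}\oplus W^{-}$ is block diagonal. A direct computation shows that the orthogonal projections $\pi^{\pm}\colon V_{1}\to\Lambda^{\pm}$ are isomorphisms; for instance the elements $\pi^{+}(e_{1}\wedge e_{a})=\tfrac12\bigl(e_{1}\wedge e_{a}+\ast(e_{1}\wedge e_{a})\bigr)$, $a=2,3,4$, form a basis of $\Lambda^{+}$. Writing $\beta=\beta^{+}+\beta^{-}\in V_{1}$, orthogonality of $\Lambda^{\pm}$ forces $W^{+}\beta^{+}=0$ and $W^{-}\beta^{-}=0$; as $\beta$ ranges over $V_{1}$ its parts $\beta^{\pm}$ exhaust $\Lambda^{\pm}$, whence $W^{+}=W^{-}=0$ and $\mathcal{W}=0$ at $p$. (Alternatively, avoiding the $\ast$-operator, $W_{ijk1}=0$ together with the trace-freeness of $W$ exhibits $\{W_{abcd}\}_{a,b,c,d\in\{2,3,4\}}$ as an algebraic curvature tensor on a three-dimensional space with vanishing Ricci contraction, hence identically zero.) Thus $W\equiv 0$ on $\Omega$.

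Finally I would extend the conclusion to all of $M^{4}$. Since quasi-Einstein metrics are real-analytic in suitable coordinates and $f$ is nonconstant, the critical set of $f$ has empty interior, so $\Omega$ is dense; by continuity of $W$ we obtain $W\equiv 0$ on $M^{4}$, which in dimension four is equivalent to being locally conformally flat. The genuinely hard work is already absorbed into \thmref{theoremcottonvanishes}, so the only step I expect to require care is this global one—verifying nontriviality and the density of $\Omega$—since the pointwise vanishing of $W$ is pure linear algebra once $W_{ijk1}=0$ is in hand.
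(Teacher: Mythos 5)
Your proposal is correct and follows essentially the same route as the paper: invoke Theorem~\ref{theoremcottonvanishes} to get $W_{ijkl}\nabla^l f=0$, deduce $W=0$ pointwise on $\{\nabla f\neq 0\}$ from the four-dimensional linear algebra, and extend to all of $M^4$ using real-analyticity. The only difference is cosmetic: where the paper cites the final steps of the corresponding arguments in the references for the implication $W_{ijk1}=0\Rightarrow W=0$, you supply that linear algebra explicitly (via the $\Lambda^{+}\oplus\Lambda^{-}$ splitting, or the trace-free three-dimensional curvature tensor), and both of your versions are valid.
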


Next, as an application of Theorems \ref{theoremcottonvanishes} and \ref{theoremweylvanishes}, jointly with Theorem 1.2 in \cite{Petersen}, we have the following classification result.

\begin{corollary}
Let $(M^n,\,g,\,f,\,m>1),$ $n\geq 4,$ be a Bach-flat noncompact steady quasi-Einstein manifold with positive Ricci curvature such that $f$ has at least one critical point. Then $(M^n,\,g)$ is a warped product with
$$g = dt^2 + \psi^2(t)g_{_{L}}\,\,\,\,\hbox{and}\,\, \,\,f = f(t),$$ where $g_{_{L}}$ is Einstein of non-negative Ricci curvature. In addition, the fiber has constant curvature if $n = 4.$
\end{corollary}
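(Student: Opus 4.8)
The plan is to combine the two preceding theorems with the structure theorem of Petersen--Wylie. By \thmref{theoremcottonvanishes}, the manifold $(M^n,g,f,m)$ has harmonic Weyl tensor and satisfies $W_{ijkl}\nabla^l f = 0$. The first observation is that these two conditions are precisely the hypotheses needed to invoke Theorem 1.2 of \cite{Petersen}, which asserts that a quasi-Einstein manifold with harmonic Weyl tensor and vanishing $W(\cdot,\cdot,\cdot,\nabla f)$ is locally a warped product over an interval with Einstein fiber, at least in a neighborhood of any point where $\nabla f \neq 0$. Thus my first step is simply to verify that the hypotheses of that theorem are met and to record the resulting local warped product splitting
\begin{equation*}
g = dt^2 + \psi^2(t)\,g_{_{L}}, \qquad f = f(t),
\end{equation*}
with $g_{_{L}}$ an Einstein metric on the fiber $L$.

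Second, I would pin down the sign of the fiber's Einstein constant. Since the total space has positive Ricci curvature by hypothesis, I would compute the Ricci curvature of the warped product in terms of $\psi$ and the Ricci curvature of the fiber (the standard O'Neill-type warped product formulas). The tangential part of $Ric$ along the fiber directions expresses $\operatorname{Ric}_{g}$ restricted to $L$ in terms of $\operatorname{Ric}_{g_L}$ and quantities built from $\psi$, $\psi'$, $\psi''$; positivity of $\operatorname{Ric}_g$ then forces the fiber Einstein constant to be nonnegative, giving the claim that $g_{_{L}}$ has non-negative Ricci curvature. This is a routine but necessary sign chase.

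Third, for the dimension-four refinement I would invoke \thmref{theoremweylvanishes}, which guarantees that $M^4$ is locally conformally flat. Here the key structural fact is that a locally conformally flat warped product $dt^2 + \psi^2(t)g_{_{L}}$ over a one-dimensional base must have a fiber of constant curvature: vanishing of the Weyl tensor, together with the warped product form, forces the intrinsic sectional curvature of the Einstein three-manifold $L$ to be constant (an Einstein $3$-manifold already has constant curvature, so in $n=4$ the three-dimensional fiber is automatically a space form, and local conformal flatness fixes the value consistently). I would therefore conclude that the fiber has constant curvature when $n=4$.

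The main obstacle I anticipate is not in the logical assembly — the two theorems plus \cite{Petersen} do most of the work — but in the bookkeeping of the warped product curvature identities needed to extract the correct sign of the fiber curvature and to promote the \emph{local} splitting of \cite{Petersen} to the global statement phrased in the corollary. In particular, one must argue that the warped product representation extends across (or is unaffected by) the critical points of $f$, and that the positivity of $\operatorname{Ric}_g$ is genuinely inherited by the fiber rather than by some mixed component; handling the behavior of $\psi$ at the endpoints of the $t$-interval and ensuring the fiber's Einstein constant is nonnegative is the delicate part of the argument.
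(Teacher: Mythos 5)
Your proposal follows exactly the route the paper intends: the paper offers no separate proof of the corollary beyond stating that it follows from Theorems \ref{theoremcottonvanishes} and \ref{theoremweylvanishes} together with Theorem 1.2 of \cite{Petersen}, which is precisely the assembly you describe. Your additional remarks on the sign of the fiber's Einstein constant and on local-to-global issues are reasonable elaborations of details the paper leaves implicit, and do not depart from its approach.
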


\section{Background and Key Lemmas}

In this section we shall present a couple of  lemmas that will be useful in the proof of our main results.  We begin recalling that the Weyl curvature $W_{ijkl}$ is defined by the following decomposition formula
\begin{eqnarray}
\label{riemanntensor}
R_{ijkl}&=&W_{ijkl}+\frac{1}{n-2}\big(R_{ik}g_{jl}+R_{jl}g_{ik}-R_{il}g_{jk}-R_{jk}g_{il}\big) \nonumber\\
 &&-\frac{R}{(n-1)(n-2)}\big(g_{jl}g_{ik}-g_{il}g_{jk}\big),
\end{eqnarray} where $R_{ijkl}$ stands for the Riemann curvature tensor. Moreover, the Cotton tensor $C_{ijk}$ is given by
\begin{equation}
\label{cottontensor} \displaystyle{C_{ijk}=\nabla_{i}R_{jk}-\nabla_{j}R_{ik}-\frac{1}{2(n-1)}\big(\nabla_{i}R
g_{jk}-\nabla_{j}R g_{ik}).}
\end{equation} It is easy to check that $C_{ijk}$ is skew-symmetric in the first two indices and trace-free in any two indices. We also remember that $W_{ijkl}$ and $C_{ijk}$ are related as follows
\begin{equation}\label{relationcottonandweyl}
-\frac{(n-3)}{(n-2)}C_{ijk} = \nabla^l W_{ijkl}.
\end{equation} Moreover, taking into account (\ref{relationcottonandweyl}) we may extend the definition of the Bach tensor for $n\geq 3$  by
\begin{equation}\label{bachwithcotton}
B_{ij} = \frac{1}{n-2}\big(\nabla^kC_{kij} + R_{kl}W_{ikjl}\big).
\end{equation} Since $W \equiv 0$ in dimension three, for $n=3$ we have
\begin{equation*}
B_{ij}=\nabla^kC_{kij}.
\end{equation*}

Following the notation employed in \cite{QiangHe}, in the spirit of  \cite{caoshinking}, we recall that the covariant $3$-tensor $D$ is given by

\begin{eqnarray}\label{definitiond}
D_{ijk} & = & \frac{1}{n-2}(R_{jk}\nabla_i f - R_{ik}\nabla_j f) + \frac{1}{(n-1)(n-2)}(R_{il}\nabla^l f g_{jk} - R_{jl}\nabla^l f g_{ik}) \nonumber \\
& & -\frac{R}{(n-1)(n-2)}(g_{jk}\nabla_i f - g_{ik}\nabla_j f).
\end{eqnarray} It is not difficult to check that the tensor $D_{ijk}$ is skew-symmetric in their first two indices and trace-free in any two indices:
\begin{equation}\label{symmetrytensord}
D_{ijk} = -D_{jik}\,\,\,\,\,\hbox{and}\,\,\,\,\, g^{ij}D_{ijk} = g^{ik}D_{ijk} = 0.
\end{equation}

In order to set the stage for the proof to follow let us recall some useful results obtained in  \cite{QiangHe}. Indeed, taking into account (\ref{definitiond}) we shall show a relation between the Cotton tensor and the Weyl tensor on a quasi-Einstein manifold.

\begin{lemma}[Chen-He \cite{QiangHe}] Let $(M^{n},\,g,\,f)$ be a quasi-Einstein manifold. Then we have:
\begin{equation}\label{cottonwithd}
C_{ijk} = \frac{m+n-2}{m}D_{ijk} - W_{ijkl}\nabla^l f.
\end{equation}
\end{lemma}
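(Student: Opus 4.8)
The plan is to derive (\ref{cottonwithd}) by a direct computation starting from the fundamental equation (\ref{eqqem}) written in local coordinates, namely $\nabla_i\nabla_j f = \lambda g_{ij} - R_{ij} + \frac{1}{m}\nabla_i f\nabla_j f$. First I would differentiate the Ricci tensor $R_{jk} = \lambda g_{jk} - \nabla_j\nabla_k f + \frac{1}{m}\nabla_j f\nabla_k f$ and antisymmetrize in the indices $i,j$. The symmetric Hessian terms $\nabla_i\nabla_j f$ cancel, the commutator of third covariant derivatives is handled by the Ricci identity $\nabla_i\nabla_j\nabla_k f - \nabla_j\nabla_i\nabla_k f = R_{ijkl}\nabla^l f$, and substituting the fundamental equation once more to eliminate the remaining Hessians yields an expression of the form
\begin{equation*}
\nabla_i R_{jk} - \nabla_j R_{ik} = -R_{ijkl}\nabla^l f + \frac{\lambda}{m}(\nabla_j f\, g_{ik} - \nabla_i f\, g_{jk}) - \frac{1}{m}(R_{ik}\nabla_j f - R_{jk}\nabla_i f).
\end{equation*}

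Next I would record an auxiliary identity for the gradient of the scalar curvature. Tracing the fundamental equation gives $\Delta f = n\lambda - R + \frac{1}{m}|\nabla f|^2$; taking the divergence of the fundamental equation, invoking the contracted second Bianchi identity $\nabla^i R_{ij} = \frac{1}{2}\nabla_j R$ together with the Bochner commutation $\Delta\nabla_j f = \nabla_j\Delta f + R_{jl}\nabla^l f$, and then eliminating $\nabla_j\Delta f$ and $\nabla^i f\,\nabla_i\nabla_j f$ by means of the trace equation and the fundamental equation, I obtain after cancellation
\begin{equation*}
\frac{1}{2}\nabla_j R = \frac{m-1}{m}R_{jl}\nabla^l f + \frac{R - (n-1)\lambda}{m}\nabla_j f.
\end{equation*}

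Finally I would insert the Weyl decomposition (\ref{riemanntensor}) into $R_{ijkl}\nabla^l f$ and assemble the Cotton tensor from (\ref{cottontensor}), substituting the identity above for the $\nabla R$ terms. The purely Weyl contribution produces $-W_{ijkl}\nabla^l f$, while the algebraic part of the decomposition already reconstructs one copy of $D_{ijk}$ as defined in (\ref{definitiond}). The remaining pieces — the terms carrying the factor $\frac{1}{m}$ from the fundamental equation, the $R_{il}\nabla^l f$ terms coming from both the Weyl decomposition and the scalar identity, and the $\lambda$- and $R$-weighted metric terms — must then be collected. A term-by-term comparison shows that each of the three groups (the $R_{jk}\nabla_i f$ group, the $R_{il}\nabla^l f\, g_{jk}$ group, and the $R\, g_{jk}\nabla_i f$ group) reproduces the corresponding summand of $D_{ijk}$ multiplied by exactly $\frac{n-2}{m}$; adding this to the copy of $D_{ijk}$ already isolated yields the coefficient $1 + \frac{n-2}{m} = \frac{m+n-2}{m}$ and hence (\ref{cottonwithd}). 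The main obstacle is the bookkeeping in this last step: one must track carefully the coefficients $\frac{1}{n-2}$, $\frac{1}{(n-1)(n-2)}$ and $\frac{1}{n-1}$ appearing in $D_{ijk}$, in the Weyl decomposition, and in the scalar identity, and verify that the $\lambda$-dependent contributions cancel so that only the $R$-weighted metric term of $D_{ijk}$ survives.
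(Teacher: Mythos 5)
Your proposal is correct: the antisymmetrized derivative of the Ricci tensor, the scalar-curvature identity $\tfrac{1}{2}\nabla_j R = \tfrac{m-1}{m}R_{jl}\nabla^l f + \tfrac{R-(n-1)\lambda}{m}\nabla_j f$, and the final bookkeeping (each group contributing $\tfrac{n-2}{m}$ times the corresponding summand of $D_{ijk}$, the $\lambda$-terms cancelling, and $1+\tfrac{n-2}{m}=\tfrac{m+n-2}{m}$) all check out. The paper itself gives no proof and only cites Chen--He, and your computation is essentially the one in that reference, so there is nothing further to compare.
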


We also need of the following results by Chen-He \cite{QiangHe}.

\begin{lemma}[Chen-He \cite{QiangHe}]
Let $(M^{n},\,g,\,f)$ be a quasi-Einstein manifold. Assume that $\Sigma$ is a level set of $f$ with $\nabla f(p) \neq 0.$ Then we have:
\begin{equation}
|D|^2 = \frac{2|\nabla f|^4}{(n-2)^2}\sum_{a,b=2}^n |h_{ab}-\frac{H}{n-1}g_{ab}|^2 + \frac{m^2}{2(n-1)(n-2)(m-1)^2}|\nabla^{\Sigma}R|^2,
\end{equation} where $h_{ab}$ stands for the second fundamental form of $\Sigma$ and $H$ is its mean curvature.
\end{lemma}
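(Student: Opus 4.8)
The plan is to evaluate $|D|^2$ pointwise at $p$ by working in an adapted orthonormal frame $\{e_1,\dots,e_n\}$ with $e_1 = \nabla f/|\nabla f|$ the unit normal to $\Sigma$ and $e_2,\dots,e_n$ tangent to $\Sigma$. In such a frame $\nabla_i f = |\nabla f|\,\delta_{i1}$, so every occurrence of $\nabla f$ in (\ref{definitiond}) collapses onto the index $1$; after factoring out $|\nabla f|$, the only nonzero families of components of $D_{ijk}$ are $D_{1ab}$ (together with its skew partner $D_{a1b}$), $D_{1a1}$, and $D_{abc}$ with $a,b,c\in\{2,\dots,n\}$. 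The strategy is to compute each family explicitly, observing that the purely tangential data enters through the second fundamental form while the mixed components $R_{a1}$ are responsible for the scalar-curvature term.

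First I would record two identities along $\Sigma$. From the fundamental equation (\ref{eqqem}), for tangent indices one has $R_{ab} = \lambda g_{ab} - \nabla_a\nabla_b f$, and since $f$ is constant on $\Sigma$ its tangential Hessian satisfies $\nabla_a\nabla_b f = -|\nabla f|\,h_{ab}$, whence $R_{ab} = \lambda g_{ab} + |\nabla f|\,h_{ab}$ and, after tracing, $R_{11} - R = -(n-1)\lambda - |\nabla f|H$. Substituting these into $D_{1ab}$ cancels the $\lambda$-contributions and gives
\begin{equation*}
D_{1ab} = \frac{|\nabla f|^2}{n-2}\Big(h_{ab} - \frac{H}{n-1}g_{ab}\Big),
\end{equation*}
which already produces the first term on the right-hand side.

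The key remaining identity is $R_{a1} = \frac{m}{2(m-1)|\nabla f|}\nabla_a R$. I would derive it by contracting (\ref{eqqem}) with $\nabla f$, obtaining $R_{ij}\nabla^j f = \lambda\nabla_i f - \tfrac12\nabla_i|\nabla f|^2 + \tfrac1m|\nabla f|^2\nabla_i f$, which for a tangent index reduces to $R_{a1}|\nabla f| = -\tfrac12\nabla_a|\nabla f|^2$. Combining the trace $R = n\lambda - \Delta f + \tfrac1m|\nabla f|^2$ with the auxiliary relation (\ref{2eq}) yields $R = (n-m)\lambda + m\mu e^{2f/m} - \tfrac{m-1}{m}|\nabla f|^2$, and differentiating tangentially (where $f$, hence $e^{2f/m}$, is constant) gives $\nabla_a|\nabla f|^2 = -\tfrac{m}{m-1}\nabla_a R$. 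This is the step I expect to be the main obstacle, since it is precisely where the quasi-Einstein constant $\mu$ must be invoked in order to trade $|\nabla f|^2$ for $R$ along the fibre; without (\ref{2eq}) the mixed components would not reassemble into $|\nabla^{\Sigma}R|^2$.

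Finally, a bookkeeping step assembles $|D|^2 = \sum_{i,j,k}D_{ijk}^2$. Using skew-symmetry in the first two indices, the tangential block contributes $\frac{2|\nabla f|^4}{(n-2)^2}\sum_{a,b=2}^n|h_{ab}-\frac{H}{n-1}g_{ab}|^2$ through the $D_{1ab}$ terms, while the components $D_{1a1}$ and $D_{abc}$ contribute $R_{a1}^2$ terms whose coefficients add up to $\frac{2|\nabla f|^2}{(n-1)(n-2)}\sum_{a}R_{a1}^2$. Inserting $R_{a1} = \frac{m}{2(m-1)|\nabla f|}\nabla_a R$ and using $\sum_{a=2}^n(\nabla_a R)^2 = |\nabla^{\Sigma}R|^2$ turns this into $\frac{m^2}{2(n-1)(n-2)(m-1)^2}|\nabla^{\Sigma}R|^2$, which is exactly the second term, completing the identity.
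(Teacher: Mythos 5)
Your computation is correct: the adapted-frame component analysis gives $D_{1ab}=\frac{|\nabla f|^2}{n-2}\bigl(h_{ab}-\frac{H}{n-1}g_{ab}\bigr)$, the coefficients of the $R_{a1}^2$ terms from $D_{1a1}$ and $D_{abc}$ do sum to $\frac{2|\nabla f|^2}{(n-1)(n-2)}$, and the identity $R_{a1}|\nabla f|=\frac{m}{2(m-1)}\nabla_a R$ obtained from contracting (\ref{eqqem}) with $\nabla f$ together with (\ref{2eq}) is exactly the right bridge to $|\nabla^\Sigma R|^2$. The paper itself gives no proof of this lemma (it is quoted from Chen--He \cite{QiangHe}), and your argument is essentially the standard one from that reference, so there is nothing further to compare.
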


The next result shows that the vanishing of the tensor $D_{ijk}$ implies interesting rigidity properties about the geometry of the level surfaces of the potential function.

\begin{proposition}[Chen-He \cite{QiangHe}]\label{levelcurvesgeometry}
Let $(M^{n},\,g,\,f,\,m>1)$ be a quasi-Einstein manifold with $D_{ijk}=0.$ Let $c$ be a regular value of $f$ and $\Sigma = \{ p \in M \ | \ f(p) = c\}$ be a level hypersurface of $f.$ We also consider $e_1 = \frac{\nabla f}{|\nabla f|}$ and choose an orthonormal frame $\{e_2,...,e_n\}$ tangent to $\Sigma$. Then: 
\begin{enumerate}
\item the scalar curvature $R$ and $|\nabla f|^2$ of $(M^{n},\,g,\,f)$ are constant on $\Sigma;$
\item $R_{1a} = 0$ for $\geq 2$ and $e_1$ is an eigenvector of $Ric;$
\item on $\Sigma$, the Ricci tensor either has a unique eigenvalue or, two distinct eigenvalues with multiplicity $1$ and $n-1,$ moreover the eigenvalue with multiplicity $1$ is in the direction of $\nabla f;$
\item the second form fundamental $h_{ab}$ of $\Sigma$ is $h_{ab} = \frac{H}{n-1}g_{ab}$;
\item the mean curvature $H$ is constant on $\Sigma;$
\item $R_{1abc} = 0$, for $a,b,c \in \{2, ..., n\}.$
\end{enumerate} 
\end{proposition}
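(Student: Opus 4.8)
The plan is to fix, at a point of $\Sigma$, the $f$-adapted orthonormal frame $\{e_1,\dots,e_n\}$ with $e_1=\nabla f/|\nabla f|$ and $e_2,\dots,e_n$ tangent to $\Sigma$, so that $\nabla_1 f=|\nabla f|$ and $\nabla_a f=0$ for $a\ge 2$, and then to read off the six assertions by substituting the hypothesis $D_{ijk}=0$ into the explicit formula (\ref{definitiond}). First I would compute the component $D_{1a1}$ for $a\ge 2$: after using $\nabla_a f=0$, $g_{1a}=0$ and $R_{al}\nabla^l f=R_{a1}|\nabla f|$, the expression collapses to $D_{1a1}=\frac{1}{n-1}R_{1a}\,|\nabla f|$, so $D_{1a1}=0$ forces $R_{1a}=0$ for every $a\ge 2$; this is assertion (2), and it says precisely that $e_1$ is an eigenvector of $Ric$. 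Next I would evaluate $D_{1ab}$ for $a,b\ge 2$; the surviving terms give $R_{ab}=\frac{R-R_{11}}{n-1}g_{ab}$, i.e. the tangential Ricci tensor is a multiple of the induced metric. This is assertion (3): the direction $\nabla f$ carries the eigenvalue $R_{11}$ with multiplicity one, while the tangential eigenvalue $\frac{R-R_{11}}{n-1}$ has multiplicity $n-1$, the two coinciding exactly when $Ric$ has a single eigenvalue.

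For the constancy statements and the umbilicity I would argue as follows. Differentiating $|\nabla f|^2$ tangentially yields $\nabla_a|\nabla f|^2=2|\nabla f|\,\nabla_a\nabla_1 f$, and the fundamental equation (\ref{eqqem}) in the slot $(a,1)$ gives $\nabla_a\nabla_1 f=\lambda g_{a1}-R_{a1}+\frac1m\nabla_a f\,\nabla_1 f=-R_{1a}=0$; hence $|\nabla f|^2$ is constant on $\Sigma$. For the remaining part of assertion (1) and for assertion (4) I would invoke $|D|^2=0$, which is immediate from $D=0$: the Chen--He lemma expanding $|D|^2$ then splits $0=|D|^2$ into two non-negative pieces, forcing both $h_{ab}=\frac{H}{n-1}g_{ab}$ (so $\Sigma$ is totally umbilic, assertion (4)) and $\nabla^{\Sigma}R=0$ (so $R$ is constant on $\Sigma$, completing assertion (1)). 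Alternatively, (4) also follows directly by inserting $R_{ab}=\frac{R-R_{11}}{n-1}g_{ab}$ into the tangential fundamental equation $R_{ab}+|\nabla f|\,h_{ab}=\lambda g_{ab}$.

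The assertions (5) and (6), which I expect to be the main obstacle, I would extract from the Codazzi equation rather than from $D$ directly. For the umbilic hypersurface $\Sigma$, Codazzi reads $R_{bca1}=(\nabla_b h)_{ca}-(\nabla_c h)_{ba}$, and substituting $h_{ca}=\frac{H}{n-1}g_{ca}$ reduces it to $R_{bca1}=\frac{1}{n-1}\big(e_b(H)\,g_{ca}-e_c(H)\,g_{ba}\big)$. Contracting $a$ with $c$, the left-hand side becomes, up to sign, the Ricci component $R_{b1}$, which vanishes by assertion (2), while the right-hand side becomes $\frac{n-2}{n-1}e_b(H)$; hence $e_b(H)=0$ for all $b\ge 2$, so $H$ is constant on $\Sigma$, which is assertion (5). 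Feeding $e_b(H)=0$ back into the Codazzi identity makes its right-hand side vanish identically, giving $R_{bca1}=0$, i.e. $R_{1abc}=0$ for $a,b,c\ge 2$, which is assertion (6). The genuinely delicate point throughout is the frame bookkeeping — tracking exactly which terms survive once $\nabla_a f=0$ and $R_{1a}=0$ are imposed — and, in the final step, pinning down the correct sign and contraction convention in Codazzi, on which the derivation of (5) and (6) ultimately hinges.
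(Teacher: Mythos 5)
The paper states this proposition without proof, citing Chen--He \cite{QiangHe}, and your argument correctly reconstructs that standard proof along essentially the same lines. Your frame computations check out: $D_{1a1}=\tfrac{1}{n-1}R_{1a}|\nabla f|$ and the $D_{1ab}$ identity give (2) and (3), the quoted $|D|^2$ decomposition (valid since $m>1$) gives (1) and (4), and the traced Codazzi equation together with $R_{1a}=0$ yields (5) and then (6).
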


Before preceeding, it is important to remember some classical equations concerning quasi-Einstein manifolds. First of all, considering the function $u = e^{-\frac{f}{m}}$ on $M^n$, we immediately get $$\nabla u = -\frac{u}{m}\nabla f$$ as well as

\begin{equation}
Hess f - \frac{1}{m}df \otimes df = -\frac{m}{u}Hess u.
\end{equation} Taking into account (\ref{eqqem}) and (\ref{2eq}), it is easy to obtain
\begin{equation}\label{fundamentalequationforu}
\frac{u^2}{m}(R-\lambda n) + (m-1)|\nabla u|^2 = -\lambda u^2 + \mu
\end{equation} We also remember that, by Wang \cite{Wang1}, if $\lambda \leq 0$, then $R \geq \lambda n.$ Now we turn our attention for steady case. Whence, it follows from (\ref{fundamentalequationforu}) that

\begin{equation}\label{nablau}
|\nabla u|^2 \leq \frac{\mu}{m-1}
\end{equation} and also
\begin{equation}\label{scalarlimitationwithu}
u^2R \leq m\mu.
\end{equation}

In the sequel we investigate the asymptotic behavior of the function $u = e^{-\frac{f}{m}}.$ More precisely, we prove a pinching estimate which plays a central role in this work.

\begin{lemma}\label{lemmaforu}
Let $(M^n,\,g,\,f,\,m>1)$ be a complete noncompact steady quasi-Einstein manifold with positive Ricci curvature such that $f$ has at least one critical point. Then, there exist positive constants $c_1$ and $c_2$ such that the function $u = e^{-\frac{f}{m}}$ satisfies the following estimates
\begin{equation}\label{estimativateorema}
c_1r(x) - c_2 \leq u(x) \leq \sqrt{\frac{\mu}{m-1}}r(x) + |u(p)|,
\end{equation} where $p$ is a critical point of $f$ and $r(x)$ is the distance function from $p.$
\end{lemma}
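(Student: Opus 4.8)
The plan is to exploit the fact that, in the steady case, the function $u=e^{-f/m}$ is strictly convex with a critical point at $p$, and to extract linear growth from this convexity together with the a priori gradient bound (\ref{nablau}). First I would combine the steady fundamental equation (\ref{eqqem}) with $\lambda=0$, namely $Ric+\nabla^2 f-\frac1m df\otimes df=0$, with the identity $\nabla^2 f-\frac1m df\otimes df=-\frac{m}{u}\nabla^2 u$ recorded just before the statement, to obtain the pointwise relation
\[
\nabla^2 u=\frac{u}{m}\,Ric.
\]
Since $u>0$ everywhere and $Ric>0$ by hypothesis, this shows $\nabla^2 u$ is positive definite, so $u$ is strictly convex; moreover, because $p$ is a critical point of $f$ and $\nabla u=-\frac{u}{m}\nabla f$, the point $p$ is a critical point of $u$ as well.

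The upper bound is immediate. The steady form of (\ref{fundamentalequationforu}) gives $|\nabla u|^2\le\frac{\mu}{m-1}$ (cf. (\ref{nablau})), so integrating $\big|\tfrac{d}{ds}u(\gamma(s))\big|\le|\nabla u|$ along a unit-speed minimizing geodesic $\gamma$ from $p$ to $x$ (which exists by completeness and Hopf--Rinow) yields $u(x)\le u(p)+\sqrt{\frac{\mu}{m-1}}\,r(x)$, which is the asserted right-hand inequality since $u(p)=|u(p)|>0$. I note here that $Ric>0$ together with the existence of a critical point forces $u$, hence $f$, to be nonconstant, so the manifold is nontrivial and $\mu>0$ by Case \cite{Case}; thus the constant $\sqrt{\mu/(m-1)}$ is genuinely positive.

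For the lower bound, fix a unit vector $v\in T_pM$ and let $\gamma_v$ be the geodesic with $\gamma_v(0)=p$, $\gamma_v'(0)=v$; set $\phi_v(s)=u(\gamma_v(s))$. Then $\phi_v'(0)=0$ and, since $\gamma_v$ is a geodesic,
\[
\phi_v''(s)=\nabla^2u\big(\gamma_v'(s),\gamma_v'(s)\big)=\frac{u(\gamma_v(s))}{m}\,Ric\big(\gamma_v'(s),\gamma_v'(s)\big)>0,
\]
so $\phi_v'$ is strictly increasing and $\phi_v'(s)>0$ for every $s>0$. I would then fix any $s_0>0$ and invoke compactness of the unit sphere $S_p\subset T_pM$: the map $v\mapsto\phi_v'(s_0)$ is continuous and strictly positive on $S_p$, hence attains a positive minimum $\delta_0:=\min_{v\in S_p}\phi_v'(s_0)>0$. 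For $x$ with $r(x)\ge s_0$, choosing a minimizing geodesic from $p$ to $x$ with initial direction $v_x$ and using that $\phi_{v_x}'$ is nondecreasing, one has $\phi_{v_x}'(s)\ge\delta_0$ for $s\in[s_0,r(x)]$; integrating and discarding the nonnegative contribution on $[0,s_0]$ gives $u(x)\ge u(p)+\delta_0\big(r(x)-s_0\big)\ge\delta_0\,r(x)-\delta_0 s_0$. For $r(x)<s_0$ the inequality $c_1 r(x)-c_2<0<u(x)$ is automatic. Taking $c_1=\delta_0$ and $c_2=\delta_0 s_0$ completes the estimate.

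The main obstacle is the lower bound, and specifically making the positivity of the geodesic derivative uniform in the direction $v$: strict convexity only yields $\phi_v'(s_0)>0$ for each fixed $v$, and one must combine completeness (so that minimizing geodesics to every $x$ exist and each $\gamma_v$ is defined for all $s$) with compactness of $S_p$ to upgrade this to a single positive constant $\delta_0$ valid along every minimizing geodesic. Everything else follows directly from the identity $\nabla^2u=\frac{u}{m}Ric$ and the gradient bound (\ref{nablau}).
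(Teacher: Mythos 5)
Your proposal is correct and follows essentially the same route as the paper: both rest on the identity $\nabla^2 u = \frac{u}{m}Ric$, the vanishing of $\nabla u$ at $p$, the gradient bound (\ref{nablau}) for the upper estimate, and a double integration along minimizing geodesics for the lower one. The only (cosmetic) difference is how the uniform constant $c_1$ is extracted — you minimize $\phi_v'(s_0)$ over the compact unit sphere of directions, while the paper bounds $\int_0^1\frac{u}{m}Ric(X,X)\,ds$ below by $\frac{c}{m}\min_{B_p(1)}u$ with $c$ the least Ricci eigenvalue on the unit ball.
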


\begin{proof} The proof will follow \cite{caoW0} (cf. Proposition 2.3 in \cite{caoW0}). Firstly, notice that by (\ref{nablau}) the upper bound in (\ref{estimativateorema}) in fact occurs for noncompact steady quasi-Einstein manifolds in general. 

Now, we deal of the lower bound. To do so, we first notice that (\ref{eqqem}) and (\ref{fundamentalequationforu}) yields
\begin{equation}Ric = \frac{m}{u}Hess u. \label{quasiforu}
\end{equation} We now assume that $p$ is a critical point of $f.$ So, taking into account that $M^n$ has positive Ricci curvature and $u>0,$ we immediately deduce from (\ref{quasiforu}) that $u$ is a strictly convex function. We then consider any minimizing normal geodesic $\gamma(s),$ $0 \leq s \leq s_0,$ for sufficiently large $s_0 > 0,$ starting from the point $p= \gamma(0).$ Further, denote by $X(s) = \dot{\gamma}(s)$ the unit tangent vector along $\gamma$ and $\frac{Du}{dt} = \dot{u} = \nabla_X u(\gamma(s)).$ With these notations in mind, we may use (\ref{quasiforu}) to achieve

\begin{equation}\label{eqhessu}
\nabla_X \dot{u} = \nabla_X \nabla_X u = \frac{u}{m} Ric(X,X).
\end{equation} Remembering that $\nabla u = -\frac{u}{m}\nabla f,$ it follows that a critical point of $f$ is also critical point of $u.$ Therefore, upon integrating (\ref{eqhessu}) along $\gamma,$ for $s \geq 1,$ we arrive at
\begin{equation}\label{ulinegamma}
\dot{u}(\gamma(s)) = \int_0^s  \frac{u}{m} Ric(X,X)ds \geq \int_0^1  \frac{u}{m} Ric(X,X)ds \geq c_1,  
\end{equation} where $$c_1 = \frac{c}{m}\min_{B_p(1)}u(x)$$ and $c>0$ is the least eigenvalue of Ricci curvature on the unit geodesic ball $B_p(1).$ 

Proceeding, on integrating (\ref{ulinegamma}) from $1$ to $s_0$ we get
\begin{eqnarray*}
u(\gamma(s_0)) &=& \int_1^{s_0}\dot{u}(s)ds + u(\gamma(1)) \nonumber\\&\geq& c_1s_0 - c_1 + u(\gamma(1))\nonumber\\&\geq& c_1s_0 - c_2.
\end{eqnarray*} This is what we wanted to prove.
\end{proof}

As an immediate application of Lemma \ref{lemmaforu} we have the following result.

\begin{corollary}
Let $(M^n,\,g,\,f,\,m>1)$ be a complete noncompact steady quasi-Einstein manifold with positive Ricci curvature such that $f$ has at least one critical point. Then $M^n$ is diffeomorphic to $\mathbb{R}^n$.
\end{corollary}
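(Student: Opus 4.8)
The final statement to prove is the Corollary: under the hypotheses of Lemma~\ref{lemmaforu}, the manifold $M^n$ is diffeomorphic to $\mathbb{R}^n$.

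\begin{proof}
The plan is to exploit the strict convexity of the function $u = e^{-\frac{f}{m}}$ that was already extracted in the proof of Lemma~\ref{lemmaforu}, and to apply a classical Morse-theoretic argument. First I would recall that equation~(\ref{quasiforu}) gives $\mathrm{Hess}\,u = \frac{u}{m}Ric$, and since $M^n$ has strictly positive Ricci curvature and $u = e^{-\frac{f}{m}} > 0$, the Hessian of $u$ is positive definite everywhere. Thus $u$ is a smooth strictly convex function on $M^n$. The key consequence of strict convexity is that $u$ has at most one critical point, and that critical point, if it exists, is a nondegenerate minimum. By hypothesis $f$ has a critical point $p$, and because $\nabla u = -\frac{u}{m}\nabla f$, the point $p$ is also a critical point of $u$; hence $u$ attains a unique nondegenerate global minimum at $p$.

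The main obstacle is to promote this single-critical-point information to a global diffeomorphism conclusion, and for this I would invoke the lower bound in~(\ref{estimativateorema}), namely $u(x) \geq c_1 r(x) - c_2$. This estimate shows that $u(x) \to +\infty$ as $r(x) = d(p,x) \to \infty$, so $u$ is a proper function: every sublevel set $\{x : u(x) \leq a\}$ is bounded and, by completeness of $M^n$, compact. Properness together with the absence of critical points away from $p$ is exactly what is needed to run the standard Morse theory deformation.

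To conclude, I would argue as follows. Since $u$ is smooth, proper, and has a unique critical point $p$ which is a nondegenerate minimum (index $0$), standard Morse theory (see Milnor, \emph{Morse Theory}) implies that $M^n$ has the homotopy type of a CW complex with a single $0$-cell, so it is contractible. For the stronger diffeomorphism statement one uses the gradient flow of $u$: away from $p$ the flow of $-\frac{\nabla u}{|\nabla u|^2}$ deforms the level sets diffeomorphically, and properness guarantees the flow is complete and sweeps out all of $M^n \setminus \{p\}$. Because the only critical point is the nondegenerate minimum at $p$, the sublevel set $\{u \leq a\}$ for $a$ slightly above $u(p)$ is a smooth closed $n$-ball (by the Morse lemma), and the flow identifies $M^n$ with the interior of this ball. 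Hence $M^n$ is diffeomorphic to $\mathbb{R}^n$, as claimed.
\end{proof}
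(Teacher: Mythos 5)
Your proposal is correct and follows essentially the same route as the paper: both deduce properness of $u$ from the lower bound in~(\ref{estimativateorema}) and strict convexity from~(\ref{quasiforu}), and then conclude $M^n \cong \mathbb{R}^n$; the paper simply cites this last implication as "well-known," whereas you fill in the standard Morse-theoretic/gradient-flow justification. No gaps.
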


\begin{proof} 
From Eq. (\ref{estimativateorema}) we immediately have that $u$ is a proper function. In particular, Eq. (\ref{quasiforu}) implies that $u$ is strictly convex and then it is well-known that $M^n$ is diffeomorphic to $\mathbb{R}^n.$  
\end{proof}

Now, we use Lemma \ref{lemmaforu} to prove the main result of this section. It plays a crucial role in the proof of Theorem \ref{theoremcottonvanishes}.

\begin{lemma}\label{bachimpliesd}
Let $(M^n,\,g,\,f,\,m>1),$ $n\geq 4,$ be a Bach-flat noncompact steady quasi-Einstein manifold with positive Ricci curvature such that $f$ has at least one critical point. Then the tensor $D$ vanishes identically.
\end{lemma}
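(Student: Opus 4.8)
The plan is to reduce the Bach-flat hypothesis to a pointwise identity for the divergence of $D$, and then to close an integration-by-parts argument on geodesic balls whose boundary terms are killed by the linear growth of $u$ supplied by \lemref{lemmaforu}.

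First I would combine the structural identities into a single Bach--$D$ identity. Substituting the Cotton--$D$ relation \eqref{cottonwithd} into the Bach formula \eqref{bachwithcotton}, and using the Weyl divergence identity \eqref{relationcottonandweyl} together with the steady form of \eqref{eqqem}, namely $\nabla_i\nabla_j f = -R_{ij} + \frac1m \nabla_i f\,\nabla_j f$, one expands $\nabla^k\!\big(W_{kijl}\nabla^l f\big)$. The two Ricci--Weyl contractions cancel against one another by the skew-symmetry of $W$ in its first pair, leaving a pointwise identity of the schematic form
\begin{equation*}
(n-2)B_{ij} = \frac{m+n-2}{m}\,\nabla^k D_{kij} + a\, D_{jki}\nabla^k f + b\, W_{ikjl}\nabla^k f\,\nabla^l f,
\end{equation*}
for constants $a,b$ depending only on $m,n$. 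Since $M^n$ is Bach-flat, this expresses $\nabla^k D_{kij}$ in terms of quantities that are bilinear in $D$, $W$ and $\nabla f$.

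Next I would produce the quadratic quantity $|D|^2$. Contracting \eqref{cottonwithd} with $D$ gives the algebraic identity $\tfrac{m+n-2}{m}|D|^2 = D^{ijk}C_{ijk} + D^{ijk}W_{ijkl}\nabla^l f$. Inserting the Cotton definition \eqref{cottontensor} and using that $D$ is trace-free and skew in its first two indices \eqref{symmetrytensord}, the piece $\int D^{ijk}C_{ijk}\,w$ reduces to $2\int D^{ijk}\nabla_i R_{jk}\,w = -2\int R_{jk}\nabla_i(D^{ijk}w)$; integrating by parts transfers $\nabla_i$ onto $D$, producing $\nabla^i D_{ijk}$ — controlled by the Bach--$D$ identity with $B\equiv 0$ — together with a term in which the derivative falls on the weight $w$ and a cutoff $\phi$. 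The remaining algebraic term $D^{ijk}W_{ijkl}\nabla^l f$, together with the $W\nabla f\,\nabla f$ term carried in by $\nabla^i D_{ijk}$, must be recombined (again via \eqref{cottonwithd}) into a further multiple of $|D|^2$ plus controllable remainders. Carried out over a ball $B_p(r)$ against $\phi$ and a suitable weight, the outcome I am aiming for is $c\int_{B_p(r)}|D|^2\,w\,\phi \le (\text{annular boundary terms})$ with $c>0$.

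The hard part is controlling those boundary terms, and this is exactly where \lemref{lemmaforu} enters. The pinching $u\asymp r$ gives, via $\nabla u = -\tfrac{u}{m}\nabla f$ and \eqref{nablau}, the decay $|\nabla f| = O(r^{-1})$; from \eqref{scalarlimitationwithu} and positivity of the Ricci curvature one gets $|Ric| = O(r^{-2})$, whence $|D| = O(r^{-3})$ by \eqref{definitiond}. The genuinely delicate point, and the reason the steady case is harder than the shrinking one, is that no weighted measure makes everything integrable: the natural weight $e^{-f}=u^m$ \emph{grows} along the linearly growing $u$. One must therefore calibrate the weight $w$ (a decaying power $u^{-\alpha}$) and the cutoff so that simultaneously the interior integrand is a positive multiple of $|D|^2 w$ and the annular terms, bounded by $\int_{B_p(2r)\setminus B_p(r)}|Ric|\,|D|\,w\,|\nabla\phi|$, tend to $0$ as $r\to\infty$. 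Granting this calibration together with the cancellation of the $W\nabla f$-type terms, letting $r\to\infty$ forces $\int_M |D|^2\,w = 0$, and hence $D\equiv 0$.
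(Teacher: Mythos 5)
Your opening step is sound and agrees with the paper: substituting \eqref{cottonwithd} into \eqref{bachwithcotton}, using \eqref{relationcottonandweyl} and the steady equation \eqref{eqqem}, the two Ricci--Weyl contractions do cancel by the skew-symmetry of $W$ in its first pair, and one is left with $(n-2)B_{ij}=\frac{m+n-2}{m}\nabla^kD_{kij}+\frac{n-3}{n-2}C_{lji}\nabla^lf-\frac1m W_{kijl}\nabla^kf\nabla^lf$. You also correctly identify that \lemref{lemmaforu} is what kills the boundary terms, and your decay rates $|\nabla f|=O(r^{-1})$, $|Ric|=O(r^{-2})$, $|D|=O(r^{-3})$ are right. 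But the middle of your argument --- extracting a signed multiple of $|D|^2$ --- has a genuine gap. Your plan is to contract \eqref{cottonwithd} with $D$, getting $\frac{m+n-2}{m}|D|^2=D^{ijk}C_{ijk}+D^{ijk}W_{ijkl}\nabla^lf$, integrate the Cotton piece by parts, and then ``recombine'' the leftover $D^{ijk}W_{ijkl}\nabla^lf$ via \eqref{cottonwithd} into a further multiple of $|D|^2$. That last step is circular: \eqref{cottonwithd} gives exactly $D^{ijk}W_{ijkl}\nabla^lf=\frac{m+n-2}{m}|D|^2-D^{ijk}C_{ijk}$, which just undoes your starting identity. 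Moreover, after integrating by parts and inserting the Bach-flat expression for $\nabla^iD_{ijk}$, you generate interior terms such as $R^{jk}D_{ijk}\nabla^iw$ and $R^{ij}W_{kijl}\nabla^kf\nabla^lf\,w$ which are merely integrable, not vanishing in the limit, and carry no definite sign; the resulting identity has the form $c\int|D|^2w=(\hbox{unsigned remainders})$ and does not force $D\equiv0$.

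The paper's route avoids all of this with one pointwise observation you are missing: contract the Bach--$D$ identity with $\nabla^iu\,\nabla^ju$ (equivalently $\nabla^if\,\nabla^jf$). Then \emph{both} the Cotton term and the Weyl term die identically, by the skew-symmetry of $C$ in its first two indices and of $W$, leaving only $(n-2)B_{ij}\nabla^iu\nabla^ju=\frac{m+n-2}{m}(\nabla^kD_{kij})\nabla^iu\nabla^ju$. One then writes this as a divergence of $D_{kij}\nabla^iu\nabla^ju\,e^{-u}u^3$: the term where the derivative hits the weight vanishes because $\nabla(e^{-u}u^3)$ is parallel to $\nabla u$ and $D$ is skew in $(k,i)$, while the Hessian terms become Ricci terms via $Ric=\frac{m}{u}\,\mathrm{Hess}\,u$ from \eqref{quasiforu} and evaluate \emph{exactly}, using \eqref{definitiond} and \eqref{symmetrytensord}, to $D_{kij}(R^{ki}\nabla^ju+R^{kj}\nabla^iu)=\frac{n-2}{2m}u|D|^2$ --- a signed quantity with no remainder. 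Finally, the boundary term over $\partial B_p(s)$ is bounded using \eqref{nablau} and \eqref{scalarlimitationwithu} by $C\,e^{-s}\mathrm{Area}(\partial B_p(s))\le C's^{n-1}e^{-s}$, where the exponential comes from the lower bound $u\ge c_1r-c_2$ of \lemref{lemmaforu} applied to the weight $e^{-u}$ (note this weight decays exponentially; your concern that ``no weighted measure makes everything integrable'' conflates the growing weight $e^{-f}=u^m$ with the decaying one $e^{-u}$ actually used). Letting $s\to\infty$ and using $B\equiv0$ gives $\int_Mu^5|D|^2e^{-u}\,dV_g=0$, hence $D\equiv0$. Without the contraction against $\nabla u\otimes\nabla u$ and the exact identity for $D_{kij}R^{ki}\nabla^ju$, your scheme cannot close.
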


\begin{proof} To start with, we combine (\ref{bachwithcotton}) and (\ref{cottonwithd}) to obtain
\begin{eqnarray*} 
(n-2)B_{ij} & = & \nabla^kC_{kij} + W_{ikjl}R_{kl} \nonumber \\
& =& \nabla^k\left( \frac{m+n-2}{m}D_{kij} - W_{kijl}\nabla^l f\right) + W_{ikjl}R_{kl} \nonumber \\
&= & \frac{m+n-2}{m}\nabla^kD_{kij} - (\nabla^k W_{kijl})\nabla^l f - W_{kijl}\nabla^k\nabla^l f + W_{ikjl}R^{kl}. \nonumber 
\end{eqnarray*} Then, using (\ref{eqqem}) and (\ref{relationcottonandweyl}), we arrive at

\begin{eqnarray} \label{generalrelationbach}
(n-2)B_{ij} & = & \frac{m+n-2}{m}\nabla^kD_{kij} + \frac{n-3}{n-2}C_{lji}\nabla^l f\nonumber\\&& - \frac{1}{m}W_{kijl}\nabla^k f \nabla ^l f.
\end{eqnarray} Recall that $\nabla u = -\frac{1}{m}u\nabla f$ and this substituted into (\ref{generalrelationbach}) yields
\begin{equation}\label{bachwithu}
(n-2)B_{ij}\nabla^i u\nabla^ju e^{-u}u^3 = \frac{m+n-2}{m}(\nabla^k D_{kij})\nabla^i u\nabla^j ue^{-u}u^3. 
\end{equation}

On the other hand, a straightforward computation gives
\begin{eqnarray*}
\nabla^k(D_{kij}\nabla^i u \nabla^j u e^{-u}u^3) & = & (\nabla^k D_{kij})\nabla^i u \nabla^j u e^{-u}u^3 + D_{kij}(\nabla^k\nabla^i u) \nabla^j u e^{-u}u^3 \\
& &+  D_{kij}\nabla^i u(\nabla^k \nabla^j u) e^{-u}u^3\\
& = & (\nabla^k D_{kij})\nabla^i u \nabla^j u e^{-u}u^3 + D_{kij}\left(\frac{u}{m}R^{ki}\right)\nabla^j ue^{-u}u^3\\
& & + D_{kij} \left(\frac{u}{m}R^{kj}\right) \nabla^i ue^{-u}u^3,
\end{eqnarray*} where we have used Eq. (\ref{quasiforu}) in the last step. Therefore, returning to Eq. (\ref{bachwithu}) we immediately achieve
\begin{eqnarray}\label{bachwithuandd}
(n-2)B_{ij}\nabla^i u\nabla^ju e^{-u}u^3 & = & \frac{m+n-2}{m}\nabla^k(D_{kij}\nabla^i u\nabla^j u e^{-u}u^3)\nonumber\\
& & -\frac{m+n-2}{m^2}D_{kij}e^{-u}u^4(R^{ki}\nabla^j u  +R^{kj}\nabla^i u). 
\end{eqnarray}

Notice that $\nabla u = -\frac{u}{m}\nabla f$ substituted into (\ref{definitiond}) provides

\begin{eqnarray}\label{tensordwithu}
-\frac{u}{m}D_{ijk} & = & \frac{1}{n-2}(R_{jk}\nabla_i u - R_{ik}\nabla_j u)\nonumber\\ && +  \frac{1}{(n-1)(n-2)}\Big[R_{il}\nabla^l u g_{jk}- R_{jl}\nabla^l u g_{ik}\nonumber\\&&- R(g_{jk}\nabla_i u - g_{ik}\nabla_j u)\Big].
\end{eqnarray} Moreover, since the tensor $D$ is skew-symmetric in the two first indices, it is not difficult to see that $D_{kij}R^{ki}\nabla^j u =0$ and then comparing with (\ref{tensordwithu}) we infer

\begin{eqnarray} \label{normD}
D_{kij}(R^{ki}\nabla^j u  +R^{kj}\nabla^i u) & = & \frac{1}{2}D_{kij}R^{kj}\nabla^i u - \frac{1}{2}D_{ikj}R^{kj}\nabla^i u \nonumber \\
& = & -\frac{1}{2}D_{kij}(R^{ij}\nabla^k u - R^{kj}\nabla^i u) \nonumber \\
& = & \frac{n-2}{2m}u|D|^2.
\end{eqnarray}

Next, upon integrating (\ref{bachwithuandd}) over the ball $B_p(s),$ we use (\ref{normD}) together with the divergence theorem to deduce

\begin{eqnarray}\label{integralbach}
\int_{B_p(s)}B(\nabla u,\nabla u)e^{-u}u^3dV_g &=& \frac{m+n-2}{m(n-2)}\Big[ \int_{\partial B_p(s)} D_{kij}\nabla^i u \nabla^j u e^{-u}u^3\nu_kd\sigma\nonumber\\&& - \frac{n-2}{2m^2}\int_{B_p(s)} u^5|D|^2e^{-u}dV_g \Big],
\end{eqnarray} where $\nu$ denotes the outward unit normal to $\partial B_p(s).$ Moreover, since $g$ has positive Ricci curvature, then $|R_{ij}|\leq R.$ This jointly with (\ref{tensordwithu}) yields

\begin{eqnarray} \label{integralinequalityD}
\left| \int_{\partial B_p(s)} uD_{kij}\nabla^i u \nabla^j u e^{-u}u^2\nu_kd\sigma \right| & \leq & C\int_{\partial B_p(s)} |\nabla u|^3(|R_{ij}| + R) u^2 e^{-u}d\sigma \nonumber \\
& \leq & 2C\left(\sqrt{\frac{\mu}{m-1}}\right)^3 \int_{\partial B_p(s) }u^2Re^{-u}d\sigma \nonumber \\
& \leq & 2C\left(\sqrt{\frac{\mu}{m-1}}\right)^3m\mu \int_{\partial B_p(s) }e^{-u}d\sigma,
\end{eqnarray} where we also have used (\ref{nablau}) and (\ref{scalarlimitationwithu}). Moreover, we already know from (\ref{estimativateorema}) that $$-u(x) \leq -c_1r(x) + c_2,$$ where $c_{1}$ and $c_{2}$ are positive constants and $r$ is the distance function. Thus, by (\ref{integralinequalityD}) one has

\begin{equation}
\label{n1}
\left| \int_{\partial B_p(s)} uD_{kij}\nabla^i u \nabla^j u e^{-u}u^2\nu_kd\sigma \right| \leq C_{1}e^{-s}Area(\partial B_p(s)).
\end{equation} The assumption of positive Ricci curvature allows to use the Bishop-Gromov theorem to infer
\begin{equation*}
Area(\partial B_p(s)) \leq C_{2}s^{n-1}.
\end{equation*} Hence, it follows from (\ref{n1}) that

\begin{equation*}
\left| \int_{\partial B_p(s)} uD_{kij}\nabla^i u \nabla^j u e^{-u}u^2\nu_kd\sigma \right| \leq C_{3}e^{-s}s^{n-1}.
\end{equation*} Therefore, by letting $s \rightarrow +\infty$ in Eq. (\ref{integralbach}) we achieve

\begin{equation*}
\int_{M}B(\nabla u,\nabla u)e^{-u}u^3dV_g = -\frac{m+n-2}{2m^3}\int_{M} u^5|D|^2e^{-u}dV_g.
\end{equation*} Finally, since $M^n$ is Bach-flat and $u>0$ we conclude $D_{ijk}=0.$ This finishes the proof of the lemma.

\end{proof}

\section{Proof of the Main Results}

\subsection{Proof of Theorem \ref{theoremcottonvanishes}}

\begin{proof} We follow the trend of Chen and He \cite{QiangHe} (see also Cao and Chen \cite{caoshinking}). To begin with, since $M^n$ is Bach-flat it follows from Lemma \ref{bachimpliesd} that $D_{ijk} = 0.$ Therefore, we may use (\ref{cottonwithd}) to get
 \begin{equation}\label{cwdzero}
C_{ijk} = -W_{ijkl}\nabla^lf.
\end{equation}
At the same time, we already know that such a metric is real analytic (cf. Proposition  2.4 in \cite{Petersen}). Therefore, taking into account (\ref{cwdzero}) as well as (\ref{relationcottonandweyl}), it suffices to show that the Cotton tensor $C_{ijk}$ vanishes at points $p\in M^n$ such that $\nabla f (p) \neq 0.$ So, we consider a regular point $p\in M^n,$  with associated level set  $\Sigma.$ Moreover, choose any local coordinates $(\theta^{2},\ldots,\theta^{n})$ on $\Sigma$ and split the metric in the local coordinates $(f,\theta^{2},\ldots,\theta^{n})$ as follows $$g=\frac{1}{|\nabla f|^{2}}df^2+g_{ab}(f,\,\theta) d\theta^{a}d\theta^{b}.$$  Letting $\partial_{f}=\partial_{1}=\frac{\nabla f}{|\nabla f|^{2}}$ we immediately get $\nabla_{1}f=1$ and  $\nabla_{a}f=0,$ for $a\geq 2.$ From (\ref{cwdzero}) and the symmetries of the Weyl tensor we have $C_{ij1} = 0.$ Next, by Proposition \ref{levelcurvesgeometry}, we have $R_{1a} = 0$ and $R_{1abc} = 0$ for any integers $2 \leq a,b,c \leq n.$ Whence, it is easy to check that 
\begin{equation*}
W_{abc1} = R_{abc1} = 0
\end{equation*} and use once more (\ref{cwdzero}) to deduce $C_{abc} = -W_{abc1}|\nabla f |^2= 0.$

We now claim that $C_{1ab}=0$ for all $a,b\geq 2.$ To prove our claim we apply the same arguments used in \cite{QiangHe} (p. 324). Indeed, notice that
\begin{eqnarray*}
C_{1ab}&=&\frac{1}{|\nabla f|^{2}}W(\nabla f,\partial_a,\nabla f,\partial_b).
\end{eqnarray*} On the other hand, from (\ref{riemanntensor}) we infer
\begin{eqnarray}
\label{lemcvanisheq2}
\frac{1}{|\nabla f|^{2}}W(\nabla f,\partial_a,\nabla f,\partial_b)&=&\frac{1}{|\nabla f|^{2}}R(\nabla f,\partial_a,\nabla f,\partial_b)+\frac{R}{(n-1)(n-2)}g_{ab}\nonumber\\&&-\frac{1}{(n-2)}\left(\frac{1}{|\nabla f|^{2}}Ric(\nabla f,\nabla f)g_{ab}+R_{ab}\right).
\end{eqnarray} Easily one verifies that $h_{ab}=\frac{\Gamma_{ab}^{1}}{|\nabla f|}.$ Moreover, we also have $\Gamma_{ab}^{1}=-\frac{1}{2}\nabla f (g_{ab}).$ Hence, it follows that
\begin{equation}
\label{lemcvanisheq3} h_{ab}=-\frac{\nabla f}{2|\nabla f|}(g_{ab}).
\end{equation} Proceeding, we invoke Proposition \ref{levelcurvesgeometry} to deduce that $|\nabla f|$ is constant on $\Sigma,$ which immediately gives $[\partial_a,\nabla f]=0,$ and then $\big\langle \frac{\nabla f}{|\nabla f|},\partial_{a}\big\rangle=0,$ which implies $\nabla_{\frac{\nabla f}{|\nabla f|}}\frac{\nabla f}{|\nabla f|}=0.$ By these settings we get 
\begin{equation}
\label{9990}
\frac{1}{|\nabla f|^2}R(\nabla f,\partial_a,\nabla f,\partial_b)=\frac{\nabla f}{(n-1)|\nabla f|}H g_{ab}-\frac{H^2}{(n-1)^2}g_{ab}.
\end{equation} In particular, by tracing (\ref{9990}) with respect to $a$ and $b$ we obtain $$\frac{1}{|\nabla f|^2}Ric(\nabla f,\nabla f)=\frac{\nabla f}{|\nabla f|}H-\frac{H^2}{(n-1)}.$$  This substituted into (\ref{9990}) yields 
\begin{equation}\label{sol}
R(\nabla f,\partial_a,\nabla f,\partial_b)=\frac{Ric(\nabla f,\nabla f)}{(n-1)}g_{ab}.
\end{equation} By using again Proposition \ref{levelcurvesgeometry} (3)  we may consider $\frac{1}{|\nabla f|^2}Ric(\nabla f,\nabla f)=\eta$ and $Ric(\partial_a,\partial_b)=\kappa g_{ab},$ for $a,b\geq2,$ where $\eta$ and $\kappa$ are the eigenvalues of the Ricci curvature. Therefore, substituting (\ref{sol}) into (\ref{lemcvanisheq2}) we achieve $C_{1ab}=0,$ which settles our claim. 

Finally, it is not difficult to see that $C_{ijk}=0$ whenever $\nabla f (p) \neq 0.$ Besides, we already know that $g$ is analytic, which allows us to conclude that $C_{ijk}=0$ on $M^n.$ So, the proof is completed.

\end{proof}

\subsection{Proof of Theorem \ref{theoremweylvanishes}}

\begin{proof} First of all, we invoke Theorem \ref{theoremcottonvanishes} to conclude that $C \equiv 0$ and $W_{ijkl}\nabla^l f = 0.$ Moreover, we consider a point $p \in M^4$ such that $\nabla f (p) \neq 0.$ Choosing an orthonormal frame $\{e_1, e_2, e_3, e_4\}$ with $e_1 = \frac{\nabla f}{|\nabla f|}$ at the point $p,$ we have $W_{ijk1} = 0$ for all $1 \leq i, j, k \leq 4$. From now on it suffices to follow the arguments applied in the final steps of the proof of Theorem 2 in \cite{ernani} (see also \cite{caoshinking}). In fact, these steps guarantee that $W_{ijkl} = 0$ whenever $\nabla f (p) \neq 0.$ Then, since $g$ is analytic, $M^4$ is locally conformally flat. This is what we wanted to prove.  
\end{proof}

\begin{acknowledgement}
The authors would like to thank A. Barros, G. Catino and R. Batista for fruitful conversations about this subject. Moreover, the authors want to thank the referee for his careful reading and valuable suggestions. 
\end{acknowledgement}

\end{document}